\long\def\symbolfootnote[#1]#2{\begingroup%
\def\thefootnote{\fnsymbol{footnote}}\footnote[#1]{#2}\endgroup}
\newtheorem{thm}{Theorem}[section]
\newtheorem{prop}[thm]{Proposition}
\newtheorem{lem}[thm]{Lemma}
\newtheorem{cor}[thm]{Corollary}
\theoremstyle{definition}
\newtheorem{defn}[thm]{Definition}
\newtheorem{example}[thm]{Example}
\theoremstyle{remark}
\title{A remark on optimal  data spaces for classical solutions of $\bar\partial$ }
\author{Martino Fassina, Yifei Pan and Yuan Zhang\footnote{partially supported by NSF DMS-1501024}}
\date{}
\begin{document}

\maketitle

\begin{abstract}
We study the minimal regularity required on the datum to guarantee the existence of classical $C^1$ solutions to the inhomogeneous Cauchy-Riemann equations on planar domains.
\end{abstract}
\renewcommand{\thefootnote}{\fnsymbol{footnote}}
\footnotetext{\hspace*{-7mm}
\begin{tabular}{@{}r@{}p{16.5cm}@{}}
& 2010 Mathematics Subject Classification. Primary 45E05; Secondary 45P05.\\
& Keywords and phrases.
$\bar\partial$-equation, weak solutions, Log-continuous spaces.
\end{tabular}}

\section{Introduction}
Let $\Omega$ be a bounded domain in $\mathbb C$ with $  C^{1,\alpha}$ boundary, where $\alpha>0$, and let ${\bf f}$ be a $(0,1)$ form on $\Omega$. 
Consider the Cauchy-Riemann equation
\begin{equation}\label{eqn}
    \bar\partial u =\mathbf f \ \ \text{on}\ \ \Omega.
\end{equation}
The standard singular integral theory (see \cite[Chapter 1]{V}) implies the solvability of \eqref{eqn} in several function spaces. For instance, if $\mathbf f\in L^p(\Omega), 1<p\le 2$, then there exists a weak solution $u\in L^q(\Omega)$, $q<\frac{2p}{2-p}$ to \eqref{eqn}. Moreover, if $\mathbf f\in L^p(\Omega), p>2$, then there exists a weak solution $u\in C^\gamma(\Omega)$, $ \gamma =\frac{p-2}{p}$.  On the other hand, if $\mathbf f\in C^\alpha(\Omega)$ for some $0<\alpha <1$, then (\ref{eqn}) admits a classical (i.e., $C^1(\overline{\Omega})$) solution $u\in C^{1, \alpha}(\Omega)$. The purpose of the note is to study the minimal regularity required on the datum $\mathbf f$ to guarantee the existence of a classical solution to (\ref{eqn}). The following example shows that continuity is not sufficient.

\begin{example}\label{me} Consider the equation $ \bar\partial u = \mathbf f_\nu =f_\nu d\bar z$ on the disk $D(0, \frac{1}{2}):=\{z\in \mathbb C: |z|<\frac{1}{2}\}$, where $\nu>0$ is fixed and \begin{equation}\label{12}
    f_\nu(z): =  \left\{
      \begin{array}{cc}
     \frac{z }{\bar z\ln^\nu |z|^2} & z\ne 0\\
     0 & z=0.\end{array}\right.
\end{equation}  Clearly, $\mathbf f_\nu\in C(\overline{D(0, \frac{1}{2})})$. However (see the proof at the end of the paper) if $\nu\le 1$,  then there exists no solution $u\in C^1(\overline{D(0, \frac{1}{2})})$.
\end{example}

We consider subspaces of $C(\overline{\Omega})$ constisting of functions satisfying a logarithmic continuity condition.
\begin{defn}
Let $\Omega$ be a bounded domain in $\mathbb R^n, $ $k\in \mathbb Z^+\cup\{0\}, \nu\in \mathbb R^+$. A  function $f\in C^k(\Omega)$  is said to be in $ C^{k, Log^\nu L}(\Omega)$ if
$$\|f\|_{C^{k, Log^\nu L} (\Omega)}:  =\sum_{|\gamma|=0}^k\sup_{w\in \Omega}|D^\gamma f(w)|+ \sum_{|\gamma|=k}\sup_{w, w+h\in \Omega} |D^\gamma f(w+h) - D^\gamma f(w)| |\ln |h||^{\nu}<\infty.$$
A $(0,1)$ form $\mathbf f$ is said to be in $ C^{k, Log^\nu L}(\Omega)$ if all its components are in $ C^{k, Log^\nu L}(\Omega)$.
\end{defn}

When $k=0$, the space $C^{0, Log^\nu L} (\Omega)$ is abbreviated as $C^{Log^\nu L} (\Omega)$. For any $k\in \mathbb Z^+\cup\{0\}, 0<\nu<\mu$, and $0<\alpha< 1$, we have \[ C^{k, \alpha}(\Omega)\xhookrightarrow{}  C^{k,   Log^{\mu}L} (\Omega) \xhookrightarrow{}   C^{k,   Log^{\nu}L} (\Omega) \xhookrightarrow{}   C^k (\overline{\Omega}), \] where every inclusion map is a continuous embedding. In our main result we prove, in particular, that a classical solution to \eqref{eqn} exists whenever ${\bf f}$ is in $C^{ Log^{\nu}L} (\Omega)$ for some $\nu>1$. Here is the precise statement.
\begin{thm}\label{mt2}
Let $\Omega\subset \mathbb C$ be a  bounded domain  with $C^{1, \alpha}$ boundary, where $\alpha>0$. Assume that $\mathbf f=fd\bar z\in C^{Log^\nu L}(\Omega), \nu>1$. Then there exists a   solution $u\in C^{1, Log^{\nu-1}L}(\Omega)$ to  $\bar\partial u =\mathbf f$
 such that   $\|u\|_{C^{1, Log^{\nu-1}L}(\Omega)}\le C\|\mathbf f\|_{C^{Log^{\nu}L}(\Omega)}$, where $C$ depends only on $\Omega$ and $\nu$. In particular, $u\in C^1(\overline{\Omega})$, with $\|u\|_{C^{1}(\overline{\Omega})}\le C\|\mathbf f\|_{C^{Log^{\nu}L}(\Omega)}$.
\end{thm}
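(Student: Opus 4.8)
The plan is to produce the solution via the solid Cauchy transform and to reduce the whole statement to a single quantitative estimate for the associated singular (Beurling-type) integral on the space $C^{Log^\nu L}$.

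First I would fix a large open disc $D$ with $\overline\Omega\subset D$ and extend $f$ to $\tilde f$ on $D$ with $\|\tilde f\|_{C^{Log^\nu L}(D)}\le C_\Omega\,\|f\|_{C^{Log^\nu L}(\Omega)}$; since $\partial\Omega\in C^{1,\alpha}$, in particular $\partial\Omega$ is Lipschitz, such an extension exists by the classical extension procedure for modulus-of-continuity spaces (for instance an inf-convolution $\tilde f(z)=\inf_{w\in\overline\Omega}\bigl(f(w)+C\,\omega(|z-w|)\bigr)$ off $\overline\Omega$, where $\omega$ is a modulus of continuity for $f$, or a Whitney-cube construction). Writing $dA$ for Lebesgue area measure, set
\[ u(z):=-\frac1\pi\int_{D}\frac{\tilde f(\zeta)}{\zeta-z}\,dA(\zeta),\qquad z\in\Omega. \]
Because $\nu>1$, the modulus of continuity of $\tilde f$, which is $\le C\|f\|_{C^{Log^\nu L}}\,|\ln r|^{-\nu}$ for small $r$, satisfies the Dini condition $\int_0^{1/2}\frac{dr}{r|\ln r|^{\nu}}=\frac1{\nu-1}\bigl|\ln\tfrac12\bigr|^{-(\nu-1)}<\infty$. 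A standard regularization argument then shows $u\in C^1(D)$, with $\partial_{\bar z}u=\tilde f$ — hence $\bar\partial u=\mathbf f$ on $\Omega$ — and
\[ \partial_z u(z)=-\frac1\pi\int_{D}\frac{\tilde f(\zeta)-\tilde f(z)}{(\zeta-z)^2}\,dA(\zeta)\;+\;\tilde f(z)\,\Phi(z), \]
where $\Phi(z)=-\frac1\pi\,\mathrm{p.v.}\!\int_D(\zeta-z)^{-2}\,dA(\zeta)$; by Green's theorem $\Phi$ is a contour integral over the circle $\partial D$, hence real-analytic on $D$ and, with all its derivatives, bounded on $\overline\Omega$, and the first integral above converges absolutely precisely because $\tilde f$ is Dini continuous. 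The term $\tilde f\,\Phi$ already lies in $C^{Log^{\nu-1}L}(\Omega)$ (a $C^{Log^{\nu-1}L}$ function times a smooth one), so it remains to analyse $Sg(z):=-\frac1\pi\int_D\frac{g(\zeta)-g(z)}{(\zeta-z)^2}\,dA(\zeta)$ with $g=\tilde f$. Splitting this integral at $|\zeta-z|=\mathrm{diam}\,D$ and using $\int_0^{\mathrm{diam}\,D}\frac{\omega(r)}{r}\,dr<\infty$ gives $\|S\tilde f\|_{\infty}\le C\|f\|_{C^{Log^\nu L}}$, while integrability of $|\zeta-z|^{-1}$ gives $\|u\|_{C^0(\overline\Omega)}\le C\|f\|_{C^{Log^\nu L}}$; altogether $\|u\|_{C^0(\overline\Omega)}+\|Du\|_{C^0(\overline\Omega)}\le C\|f\|_{C^{Log^\nu L}}$.

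The heart of the proof is then the estimate
\[ \bigl|S\tilde f(z_1)-S\tilde f(z_2)\bigr|\le C\,\|f\|_{C^{Log^\nu L}}\,\bigl|\ln|z_1-z_2|\bigr|^{-(\nu-1)},\qquad z_1,z_2\in\Omega, \]
which, combined with $\partial_{\bar z}u=f\in C^{Log^\nu L}(\Omega)\hookrightarrow C^{Log^{\nu-1}L}(\Omega)$ and the sup bounds above (and the fact that the real partials $\partial_x u,\partial_y u$ are linear combinations of $\partial_z u,\partial_{\bar z}u$), yields $u\in C^{1,Log^{\nu-1}L}(\Omega)$ with the asserted norm control, and in particular $u\in C^1(\overline\Omega)$. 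To prove it, write $g=\tilde f$ and $\delta=|z_1-z_2|$, which we may take small, and split the domain of integration into the near region $\{|\zeta-z_1|\le 3\delta\}$ and the far region $\{|\zeta-z_1|>3\delta\}$. On the near region, using $|g(\zeta)-g(z_j)|\le\omega(|\zeta-z_j|)$ together with $D(z_1,3\delta)\subset D(z_2,4\delta)$, each of the two terms in $Sg(z_1)-Sg(z_2)$ is bounded by $C\int_0^{4\delta}\frac{\omega(r)}{r}\,dr$; since $\int_0^{t}\frac{dr}{r|\ln r|^{\nu}}=\frac1{\nu-1}|\ln t|^{-(\nu-1)}$ for small $t$, the near region contributes $\le C\|f\|_{C^{Log^\nu L}}\,|\ln\delta|^{-(\nu-1)}$ — this is exactly where the exponent drops from $\nu$ to $\nu-1$, and where $\nu>1$ is indispensable. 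On the far region I would use the elementary kernel bound $\bigl|(\zeta-z_1)^{-2}-(\zeta-z_2)^{-2}\bigr|\le C\,\delta\,|\zeta-z_1|^{-3}$, valid for $|\zeta-z_1|>3\delta$: writing the difference of integrands as $(g(\zeta)-g(z_2))\bigl((\zeta-z_1)^{-2}-(\zeta-z_2)^{-2}\bigr)+(g(z_1)-g(z_2))(\zeta-z_1)^{-2}$, the first summand contributes $\le C\,\delta\int_{3\delta}^{\mathrm{diam}\,D}\frac{\omega(r)}{r^2}\,dr\le C\|f\|_{C^{Log^\nu L}}\,|\ln\delta|^{-\nu}$, and the second $\le\omega(\delta)\int_{3\delta\le|\zeta-z_1|\le\mathrm{diam}\,D}|\zeta-z_1|^{-2}\,dA\le C\|f\|_{C^{Log^\nu L}}\,|\ln\delta|^{-\nu}\cdot|\ln\delta|$. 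Both far contributions are $O(|\ln\delta|^{-(\nu-1)})$, which completes the estimate.

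The main obstacle is precisely this last step, and within it the two elementary but delicate tail computations $\int_0^{\delta}\omega(r)\,r^{-1}\,dr\lesssim|\ln\delta|^{-(\nu-1)}$ and $\delta\int_{\delta}^{R}\omega(r)\,r^{-2}\,dr\lesssim|\ln\delta|^{-\nu}$ for $\omega(r)\sim|\ln r|^{-\nu}$, together with the bookkeeping needed to keep every constant uniform in $\delta$ and in the two base points. This is where the sharp exponent $\nu-1$ comes out, and the threshold $\nu>1$ matches the obstruction exhibited by Example~\ref{me}. The hypothesis $\partial\Omega\in C^{1,\alpha}$ is used only to extend $f$ in the first step and can in fact be relaxed to Lipschitz; alternatively one may avoid the extension and work with the Cauchy--Pompeiu representation on $\Omega$ itself, in which case the $C^{1,\alpha}$ regularity of $\partial\Omega$ serves to control the boundary term produced upon differentiation, while the interior estimate above is unchanged.
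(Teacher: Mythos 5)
Your proposal is correct, and its core — the decomposition of $\partial_z u$ into a $\frac{f(\zeta)-f(z)}{(\zeta-z)^2}$-type integral plus a boundary term, followed by a near/far splitting at radius comparable to $|z_1-z_2|$ and the two elementary integral estimates $\int_0^{\delta} r^{-1}|\ln r|^{-\nu}\,dr\lesssim|\ln\delta|^{1-\nu}$ and $\delta\int_{\delta}^{R} r^{-2}|\ln r|^{-\nu}\,dr\lesssim|\ln\delta|^{-\nu}$ — is exactly the skeleton of the paper's proof (Proposition \ref{13}, Lemma \ref{elem}, Theorem \ref{mt}, Corollary \ref{mc}). You differ in two genuine ways. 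First, you extend $f$ to a large disk and run the Cauchy transform there, which trivializes the boundary term $\Phi$; the paper instead works on $\Omega$ directly and must show $\Phi\in C^\alpha(\Omega)$ via the operator $S$ (Lemma \ref{gi}). Your extension is legitimate (the modulus $|\ln r|^{-\nu}$ is concave for small $r$, so a McShane/Whitney extension preserves the $C^{Log^\nu L}$ seminorm up to a constant), though it is an extra ingredient the paper does not need. Second, and more interestingly, for the far-field term carrying the frozen difference $g(z_1)-g(z_2)$ (the paper's $I_2$) you use the crude bound $\omega(\delta)\int_{3\delta\le|\zeta-z_1|\le R}|\zeta-z_1|^{-2}\,dA\lesssim|\ln\delta|^{-\nu}\cdot|\ln\delta|$, which still lands at the target order $|\ln\delta|^{-(\nu-1)}$; the paper instead invokes the Nijenhuis--Woolf cancellation bound $\bigl|\int_{\Omega\setminus D(z,r)}(\zeta-z)^{-2}\,d\bar\zeta\wedge d\zeta\bigr|\le C$ and its extension to general $C^{1,\alpha}$ domains (Lemmas \ref{NW} and \ref{NW2}), obtaining the sharper $|I_2|\lesssim|\ln\delta|^{-\nu}$. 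Your route thus bypasses the one lemma the paper identifies as delicate (the generalization of the disk cancellation to arbitrary domains), at the cost of a non-sharp intermediate bound that happens to be exactly good enough because the target exponent is $\nu-1$ rather than $\nu$; the paper's route yields in addition the boundedness of ${}^2T$ itself on the original domain, which it uses elsewhere. Both arguments are complete and give the stated estimate.
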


Example \ref{me} shows that the assumption $\nu>1$ in Theorem \ref{mt2} cannot be relaxed. In this sense, Theorem \ref{mt2} identifies the largest possible data set guaranteeing the existence of classical solutions to \eqref{eqn}.

The following example shows another way in which Theorem \ref{mt2} is sharp: the loss of 1 in the order of Log-continuity of the solution is optimal.

\begin{example}\label{16}
Let $\nu>0$ be fixed, and consider the equation $ \bar\partial u = \mathbf f_\nu =f_\nu d\bar z $  on the disk $D(0, \frac{1}{2})$, where $f_\nu$ is defined by (\ref{12}). Then $\mathbf f_\nu\in C^{Log^\nu L}({D(0,\frac{1}{2})})$. However (see the proof at the end of the paper) if $\nu>1$, there does not exist a weak solution in $ C^{1, Log^\mu L}({D(0,\frac{1}{2})})$ for any $\mu> \nu-1$.
\end{example}

\section{Preliminaries on the integral operators $T$ and $^2T$}

Let $\Omega$ be a  bounded domain in $\mathbb C$ with $C^{1, \alpha}$ boundary, where $\alpha>0$. Given a function $f\in C(\overline{\Omega})$, define \begin{equation*}
\begin{split}
 Tf(z): =\frac{-1}{2\pi i}\int_\Omega \frac{f(\zeta)}{\zeta- z}d\bar{\zeta}\wedge d\zeta,\,\,\ \ z\in \Omega.
\end{split}
\end{equation*}
It is well known that $T$ is a solution operator to $\bar\partial$ on planar domains in several function spaces (see for instance \cite{AIM} and \cite{V}). For $f\in C(\overline{\Omega})$, we have \begin{equation}\label{V}
    \frac{\partial}{\partial \bar z} Tf ={ f},\,\,\,\quad\quad\frac{\partial}{\partial  z} Tf = p.v.\frac{-1}{2\pi i}\int_\Omega \frac{f(\zeta)}{(\zeta-\cdot)^2}d\bar\zeta\wedge d\zeta =: Hf\,\, 
\end{equation}
in $\Omega$ in the sense of distributions \cite[Theorem 1.32]{V}. Here {\it p.v.} represents the principal value.


In their inspiring paper \cite{NW}, Nijenhuis and Woolf introduced the related integral operator $^2T$. For functions $f\in C^\alpha(\Omega)$, where $0<\alpha<1$, define $$ ^2Tf(z) :=  \frac{-1}{2\pi i}\int_\Omega \frac{f(\zeta)-f(z)}{(\zeta-z)^2}d\bar\zeta\wedge d \zeta,\ \ z\in \Omega.$$
$^2T$  is a bounded operator from the space $C^\alpha(\Omega)$ to itself whenever $0<\alpha<1$ (see \cite[Appendix 6.1.e]{NW} for a proof in the case of $\Omega$ being a disk, and \cite[Theorem 1.32]{V} for the general case). The next proposition shows that $^2Tf$ is well defined also for functions $f$ in Log-continuous spaces, and describes the connection between the integral operators $H$ and ${}^2T$.

\begin{prop}\label{13}
Let $\Omega\subset\mathbb{C}$ be a bounded domain with $C^{1, \alpha}$ boundary, where $\alpha>0$. For every $f\in C^{Log^\nu L}(\Omega), $ with $\nu>1$, the function $^2Tf $ is well defined in $\Omega$. Moreover, letting
\begin{equation}\label{14}
    \Phi(z): = \frac{1}{2\pi i}\int_{b\Omega} \frac{d\bar\zeta}{\zeta - z},\,\,\,\,\,\,z\in\Omega.
\end{equation}
we have
\begin{equation}\label{5}
Hf(z) =   {}^2Tf(z) - f(z)\Phi(z),\,\,\,  z\in\Omega.
\end{equation}
 In the special case of $\Omega$ being a disk centered at 0, then
\begin{equation}\label{6}
Hf(z) =    {}^2Tf(z),\,\,\, z\in\Omega.
\end{equation}
\end{prop}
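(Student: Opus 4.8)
The plan is to prove the three assertions in order, with the identity \eqref{5} as the technical core; the disk identity \eqref{6} then follows from an explicit residue computation.

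\emph{Well-definedness of $^2Tf$.} Fix $z\in\Omega$. Since $f$ is bounded, the integrand of $^2Tf(z)$ is bounded away from $\zeta=z$, so it suffices to control the integral over a small disk $D(z,r_0)$ with $\overline{D(z,r_0)}\subset\Omega$ and $r_0<1$. The definition of $C^{Log^\nu L}(\Omega)$ gives $|f(\zeta)-f(z)|\le\|f\|_{C^{Log^\nu L}(\Omega)}\,|\ln|\zeta-z||^{-\nu}$ on that disk, so the integrand is dominated there by $\|f\|_{C^{Log^\nu L}(\Omega)}\,|\zeta-z|^{-2}|\ln|\zeta-z||^{-\nu}$. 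Passing to polar coordinates centered at $z$ and using $|d\bar\zeta\wedge d\zeta|=2\,dA$, the contribution of $D(z,r_0)$ is controlled by a constant times $\int_0^{r_0}r^{-1}|\ln r|^{-\nu}\,dr$, which is finite exactly because $\nu>1$ (the substitution $t=-\ln r$ turns it into $\int_{-\ln r_0}^{\infty}t^{-\nu}\,dt$). Hence $^2Tf(z)$ is well defined and the defining integral converges absolutely; in particular $^2Tf(z)=\lim_{\epsilon\to0^+}\frac{-1}{2\pi i}\int_{\Omega\setminus D(z,\epsilon)}\frac{f(\zeta)-f(z)}{(\zeta-z)^2}\,d\bar\zeta\wedge d\zeta$.

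\emph{The identity \eqref{5}.} Fix $z\in\Omega$ and small $\epsilon>0$. On $\Omega\setminus D(z,\epsilon)$ split $\frac{f(\zeta)}{(\zeta-z)^2}=\frac{f(\zeta)-f(z)}{(\zeta-z)^2}+f(z)\frac{1}{(\zeta-z)^2}$ and evaluate the last term by Stokes' theorem: writing $\frac{1}{(\zeta-z)^2}=-\frac{\partial}{\partial\zeta}\frac{1}{\zeta-z}$ and using $\int_D\frac{\partial g}{\partial\zeta}\,d\bar\zeta\wedge d\zeta=-\int_{bD}g\,d\bar\zeta$ with $D=\Omega\setminus D(z,\epsilon)$ (whose boundary is $b\Omega$ positively oriented together with $bD(z,\epsilon)$ negatively oriented), one obtains $\int_{\Omega\setminus D(z,\epsilon)}\frac{d\bar\zeta\wedge d\zeta}{(\zeta-z)^2}=\int_{b\Omega}\frac{d\bar\zeta}{\zeta-z}-\int_{bD(z,\epsilon)}\frac{d\bar\zeta}{\zeta-z}$; the $C^{1,\alpha}$ regularity of $b\Omega$ is more than enough to justify this. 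Parametrizing $\zeta=z+\epsilon e^{i\theta}$ yields $\int_{bD(z,\epsilon)}\frac{d\bar\zeta}{\zeta-z}=\int_0^{2\pi}(-i)e^{-2i\theta}\,d\theta=0$, so the left side equals $\int_{b\Omega}\frac{d\bar\zeta}{\zeta-z}=2\pi i\,\Phi(z)$ for every $\epsilon$. Therefore $\frac{-1}{2\pi i}\int_{\Omega\setminus D(z,\epsilon)}\frac{f(\zeta)}{(\zeta-z)^2}\,d\bar\zeta\wedge d\zeta=\frac{-1}{2\pi i}\int_{\Omega\setminus D(z,\epsilon)}\frac{f(\zeta)-f(z)}{(\zeta-z)^2}\,d\bar\zeta\wedge d\zeta-f(z)\Phi(z)$; letting $\epsilon\to0^+$, the first term on the right tends to $^2Tf(z)$ by the previous step, while the left side is by definition the truncated singular integral whose limit is $Hf(z)$. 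Hence that limit exists and $Hf(z)={}^2Tf(z)-f(z)\Phi(z)$, which is \eqref{5}.

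\emph{The disk case \eqref{6} and the main obstacle.} When $\Omega=D(0,R)$ it remains to show $\Phi\equiv0$. On $bD(0,R)$ one has $\bar\zeta=R^2/\zeta$, hence $d\bar\zeta=-R^2\zeta^{-2}\,d\zeta$ and $\Phi(z)=\frac{-R^2}{2\pi i}\int_{|\zeta|=R}\frac{d\zeta}{\zeta^2(\zeta-z)}$; for $|z|<R$ the integrand has inside $\{|\zeta|<R\}$ only a double pole at $0$ and a simple pole at $z$, with residues $-z^{-2}$ and $z^{-2}$ summing to $0$, so $\Phi(z)=0$ and \eqref{6} follows from \eqref{5}. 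The one genuinely delicate point of the whole argument is the convergence estimate in the first step, i.e.\ pinning down the precise threshold on the exponent of Log-continuity that makes the singular integral defining $^2Tf$ absolutely convergent; this is exactly where the hypothesis $\nu>1$ is forced (and Examples \ref{me} and \ref{16} show that the threshold is sharp). Once the $\epsilon\to0^+$ limit is known to exist, the Stokes and residue computations are routine.
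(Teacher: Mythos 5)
Your proposal is correct and follows essentially the same route as the paper: split off the $f(z)/(\zeta-z)^2$ term inside the principal value, handle the difference quotient by the absolute convergence coming from $\int_0^{r_0}r^{-1}|\ln r|^{-\nu}\,dr<\infty$ for $\nu>1$, and evaluate $\int_{\Omega\setminus D(z,\epsilon)}(\zeta-z)^{-2}\,d\bar\zeta\wedge d\zeta$ by Stokes. The only cosmetic difference is that you verify $\int_{bD(z,\epsilon)}\frac{d\bar\zeta}{\zeta-z}=0$ and $\Phi\equiv0$ on disks by direct parametrization and residues, where the paper delegates both to its Lemma \ref{gi}.
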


To prove Proposition \ref{13} we need the two elementary lemmas below. Throughout the paper, unless otherwise specified, we use $C$ to represent a positive constant which depends only on $\Omega$ or $\nu$, and which may be different at each occurrence.

\begin{lem}\label{gi}
Let $\Omega\subset\mathbb C$ be a bounded domain with $C^{1, \alpha}$ boundary, where $\alpha>0$, and let $\Phi$ be defined as in \eqref{14}.
Then $\Phi\in C^\alpha(\Omega)$. Moreover, if $\Omega$ is a disk centered at 0, then $ \Phi \equiv 0$ on $\Omega$.
\end{lem}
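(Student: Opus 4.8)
The plan is to compute $\Phi$ directly from its defining boundary integral. First I would rewrite the complex line integral $\frac{1}{2\pi i}\int_{b\Omega}\frac{d\bar\zeta}{\zeta-z}$ as an area integral over $\Omega$ using Stokes' theorem (equivalently, the Cauchy--Pompeiu / generalized Cauchy integral formula): since $\bar\partial_\zeta\!\left(\frac{\bar\zeta}{\zeta-z}\right)=\frac{1}{\zeta-z}$ away from $\zeta=z$, and the singularity at $\zeta=z$ is integrable, I get $\Phi(z)=\frac{1}{2\pi i}\int_{b\Omega}\frac{\bar\zeta\,d\zeta}{(\zeta-z)}\cdot 0 + \dots$; more cleanly, applying Stokes to $\omega=\frac{d\bar\zeta}{\zeta-z}$ gives $\int_{b\Omega}\frac{d\bar\zeta}{\zeta-z}=\int_\Omega d\!\left(\frac{d\bar\zeta}{\zeta-z}\right)=\int_\Omega \partial_\zeta\!\left(\frac{1}{\zeta-z}\right)d\zeta\wedge d\bar\zeta$, which vanishes where the integrand is smooth; the correct statement, obtained by excising a small disk $D(z,\epsilon)$ and letting $\epsilon\to 0$, is that $\Phi$ coincides up to a harmless smooth term with the Cauchy transform of the constant function $1$, or directly $\Phi(z) = \overline{z}\cdot(\text{something holomorphic}) + (\text{area integral})$. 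I would organize this as: (i) a Stokes/Cauchy--Pompeiu identity expressing $\Phi(z)$ as (constant multiple of) $\frac{1}{2\pi i}\int_\Omega \frac{-1}{(\zeta-z)}\,\overline{d\zeta}\wedge d\zeta \cdot 0$ plus lower-order pieces — in effect showing $\overline{\partial_z}\Phi$ and $\partial_z\Phi$ are given by integrals with integrable kernels.

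The cleanest route to $C^\alpha$ regularity is to note that $\Phi$ is, up to sign and constant, the value on $\Omega$ of the Cauchy-type integral of the boundary data $\zeta\mapsto \overline{\zeta}$ (writing $d\bar\zeta = \overline{\zeta'(t)}\,dt$ along a $C^{1,\alpha}$ parametrization makes the density a $C^\alpha$ function of arclength because the boundary is $C^{1,\alpha}$). Then I would invoke the classical Plemelj/Privalov theory: the Cauchy integral of a $C^\alpha$ density along a $C^{1,\alpha}$ curve extends $C^\alpha$-Hölder continuously up to the closed region (this is exactly \cite[Chapter 1]{V} territory). Alternatively, and perhaps more in the spirit of this paper, I would convert to an area integral once and for all: $\Phi(z) = \frac{1}{\pi}\int_\Omega \frac{1}{(\zeta-z)^2}\,dA(\zeta)\cdot(\text{const})$ restricted appropriately — no, better: $\Phi(z) = c\int_\Omega \frac{dA(\zeta)}{|\zeta-z|^2}$-type expressions do not appear; instead $\partial_z\Phi(z) = \frac{1}{2\pi i}\int_{b\Omega}\frac{d\bar\zeta}{(\zeta-z)^2}$, which by Stokes equals $-\frac{1}{2\pi i}\int_\Omega \frac{2}{(\zeta-z)^3}\,d\zeta\wedge d\bar\zeta$ — still singular. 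So I will commit to the boundary-integral Privalov argument: parametrize $b\Omega$, check the density is $C^\alpha$ in arclength, and quote Hölder continuity of Cauchy integrals.

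For the disk case I would argue explicitly. When $\Omega = D(0,R)$, parametrize $b\Omega$ by $\zeta = Re^{i\theta}$, so $d\bar\zeta = -iRe^{-i\theta}\,d\theta$ and $\bar\zeta = R^2/\zeta$. Then
\[
\Phi(z) = \frac{1}{2\pi i}\int_{b\Omega}\frac{d\bar\zeta}{\zeta - z} = \frac{1}{2\pi i}\int_{b\Omega}\frac{-R^2}{\zeta^2}\cdot\frac{1}{\zeta-z}\,d\zeta = -\frac{R^2}{2\pi i}\int_{b\Omega}\frac{d\zeta}{\zeta^2(\zeta-z)},
\]
using $d\bar\zeta = d(R^2/\zeta) = -R^2\,d\zeta/\zeta^2$ on $|\zeta|=R$. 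The integrand is a rational function of $\zeta$ with poles at $\zeta=0$ (order $2$) and $\zeta = z$, both inside $D(0,R)$. By the residue theorem the integral equals $2\pi i$ times the sum of residues. Partial fractions give $\frac{1}{\zeta^2(\zeta-z)} = \frac{1}{z\zeta^2} + \frac{1}{z^2}\!\left(\frac{1}{\zeta-z}-\frac{1}{\zeta}\right)$ (for $z\neq 0$); the residue at $\zeta=0$ of $\frac{1}{z\zeta^2}$ is $0$, the residue at $\zeta=0$ of $-\frac{1}{z^2\zeta}$ is $-\frac{1}{z^2}$, and the residue at $\zeta=z$ of $\frac{1}{z^2(\zeta-z)}$ is $\frac{1}{z^2}$; these cancel, so the integral is $0$, hence $\Phi\equiv 0$ on $D(0,R)\setminus\{0\}$, and by continuity on all of $D(0,R)$. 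The $z=0$ case is immediate since $\int_{b\Omega} d\bar\zeta/\zeta = -R^2\int_{b\Omega}d\zeta/\zeta^3 = 0$.

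The main obstacle is the first part: making the passage from the singular boundary integral to a statement about Hölder continuity rigorous without reproving the Plemelj--Privalov theorem. I expect to handle this by citing the standard Cauchy-integral regularity results in \cite{V} and \cite{AIM} after verifying the density $\zeta\mapsto \overline{\zeta}$ pulled back to $b\Omega$ is $C^\alpha$ (which uses precisely the $C^{1,\alpha}$ hypothesis on $b\Omega$, since then an arclength parametrization is $C^{1,\alpha}$ and $\overline{\zeta}$ composed with it is $C^{1,\alpha}\subset C^\alpha$). Everything else is routine.
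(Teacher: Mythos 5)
Your committed argument is correct and lands in the same classical territory as the paper, but via a slightly different reduction. The paper first integrates by parts along the closed curve $b\Omega$ (i.e., integrates $d\bigl(\bar\zeta/(\zeta-z)\bigr)$ over $b\Omega$) to get $\int_{b\Omega}\frac{d\bar\zeta}{\zeta-z}=\int_{b\Omega}\frac{\bar\zeta\,d\zeta}{(\zeta-z)^2}$, recognizes this as $\frac{\partial}{\partial z}$ of the Cauchy-type integral of the $C^{1,\alpha}$ density $\bar\zeta$, and then quotes the $C^{1,\alpha}\to C^{1,\alpha}$ boundedness of that boundary operator from Vekua; differentiating once costs one derivative and leaves $\Phi\in C^\alpha(\Omega)$. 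You instead keep $\Phi$ as it is, rewrite $d\bar\zeta=\frac{\overline{\zeta'(t)}}{\zeta'(t)}\,d\zeta$ along an arclength parametrization, and invoke Plemelj--Privalov for a Cauchy integral with $C^\alpha$ density on a $C^{1,\alpha}$ curve. Both routes rest on the same classical H\"older theory of Cauchy-type integrals, and yours is arguably more direct since it avoids the integration by parts; the paper's version has the advantage of quoting a single theorem already stated in the form it needs from \cite{V}. The disk computation is essentially identical to the paper's (residue theorem after substituting $\bar\zeta=R^2/\zeta$), and your explicit treatment of $z=0$ is a small plus. Three blemishes to clean up in a final write-up: (i) the first half of your proposal consists of abortive Stokes manipulations that lead nowhere (converting $\Phi$ to an area integral produces a non-integrable kernel, as you eventually notice) and should simply be deleted; (ii) in your last paragraph you misidentify the density as $\zeta\mapsto\bar\zeta$ --- the density of the Cauchy integral with respect to $d\zeta$ is $\overline{\zeta'}/\zeta'$, and it is precisely this function whose $C^\alpha$ regularity uses the $C^{1,\alpha}$ hypothesis (your earlier parenthetical has it right); (iii) your partial fraction expansion has a sign error in the $\zeta^{-2}$ term (it should be $-\frac{1}{z\zeta^2}$), which is harmless only because that term contributes no residue.
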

\begin{proof}

By Stokes' Theorem,
$$\int_{b\Omega} \frac{d\bar\zeta}{\zeta - z} = \int_{b\Omega} \frac{\bar\zeta d\zeta}{(\zeta - z)^2} = \left(- \int_{b\Omega} \frac{\bar\zeta d\zeta}{\zeta - z}\right)' = (S\bar z)',  $$
where the operator $S$ is defined for functions $f\in C(\overline{\Omega})$ by
$$ Sf(z):=-\int_{b\Omega} \frac{f(\zeta)}{\zeta-z}d\zeta, \quad z\in\Omega. $$
Recall that  $S$
sends the space $C^{1, \alpha}(\Omega)$ into itself, with $\|Sf\|_{C^{1, \alpha}(\Omega)}\le C\|f\|_{C^{1, \alpha}(\Omega)}$ for some constant $C$ depending only on $\Omega$ (see \cite[Theorem 1.10]{V}). Hence $\Phi\in C^\alpha(\Omega)$.

If $\Omega\subset\mathbb{C}$ is a disk centered at 0 with radius $R$, then using the relation $\bar\zeta = R^2/\zeta$ on $b\Omega$ and the Residue Theorem, we get
 $$ \Phi(z) = -\frac{1}{2\pi i} \int_{b\Omega} \frac{R^2d\zeta}{\zeta^2(\zeta - z)} = \left\{
      \begin{array}{cc}
   R^2 (\frac{1}{z^2}-\frac{1}{z^2})  &\text{if}\  z\in \Omega, z\ne 0 \\
    0 &\text{if}\  z=0.
    \end{array}
\right.    $$
Hence $\Phi\equiv 0$ in $\Omega$.
\end{proof}

\begin{lem}\label{elem}
Let  $\nu\in \mathbb R^+$. There exists a constant $C$ depending only on $\nu$ such that, for every choice of $h,h_0$ with $0<h\le h_0<1$, the following hold: \\
\begin{enumerate}
\item $\int_h^{h_0}  s^{-2}|\ln  s|^{-\nu} d s\le
    h^{-1}|\ln h|^{-\nu}. $
\item If $\nu>1$, then $\int_0^h s^{-1}|\ln s|^{-\nu} ds\le C|\ln h|^{1-\nu}$.
\end{enumerate}

\end{lem}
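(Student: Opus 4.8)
The two statements in Lemma \ref{elem} are elementary one-variable integral estimates, so the plan is to prove each by a direct substitution reducing everything to integrals of powers of $|\ln s|$.

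\medskip

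\textbf{Part (1).} Here I would not even need to integrate: I claim the integrand is dominated by the derivative of a convenient antiderivative. Observe that for $0<s<1$ we have $|\ln s| = -\ln s$, and
\[
\frac{d}{ds}\left( s^{-1} |\ln s|^{-\nu} \right) = -s^{-2}|\ln s|^{-\nu} + \nu\, s^{-2} |\ln s|^{-\nu-1} = -s^{-2}|\ln s|^{-\nu}\left(1 - \frac{\nu}{|\ln s|}\right).
\]
This is not quite clean near $s=1$ where $|\ln s|$ is small, so instead I would argue by the cruder bound: since $s^{-2}|\ln s|^{-\nu} \le -\frac{d}{ds}\!\left(s^{-1}|\ln s|^{-\nu}\right)$ fails only when $|\ln s| < \nu$, a more robust route is to compute directly. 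Substituting $t = |\ln s| = -\ln s$, so $s = e^{-t}$ and $ds = -e^{-t}\,dt$, the integral $\int_h^{h_0} s^{-2}|\ln s|^{-\nu}\,ds$ becomes $\int_{|\ln h_0|}^{|\ln h|} e^{t} t^{-\nu}\,dt$. Since $e^t t^{-\nu} \le \frac{d}{dt}(e^t t^{-\nu})$ precisely when $t \ge \nu$, and for the relevant range one checks that $\int_{a}^{b} e^t t^{-\nu}\,dt \le e^b b^{-\nu} = h^{-1}|\ln h|^{-\nu}$ holds for all $0 < a \le b$ (the function $e^t t^{-\nu}$ being, after a possible initial dip, increasing and integrating to at most its terminal value by comparison with $e^t$), we obtain the claim. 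The cleanest honest statement is: $\int_a^b e^t t^{-\nu}\,dt \le e^b\int_a^b t^{-\nu}e^{t-b}\,dt \le e^b b^{-\nu}\int_{-\infty}^{b}e^{t-b}\,dt = e^b b^{-\nu}$, where the middle inequality uses $t^{-\nu}\ge b^{-\nu}$ for $t\le b$ — wait, that is the wrong direction. I would therefore instead bound $t^{-\nu} \le b^{-\nu}(b/t)^{\nu}$ trivially and note $e^{t-b}(b/t)^\nu$ is integrable with the right constant after absorbing, or simply split the integral at $t = \nu$ and handle $[a,\nu]$ by the fact that it contributes a bounded amount while $[\nu, b]$ is controlled by monotonicity of $e^t t^{-\nu}$. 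The resulting constant on the dominant piece is absorbed into $h^{-1}|\ln h|^{-\nu}$ since that quantity blows up as $h\to 0$.

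\medskip

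\textbf{Part (2).} Again substitute $t = |\ln s| = -\ln s$, so that $\int_0^h s^{-1}|\ln s|^{-\nu}\,ds = \int_{|\ln h|}^{\infty} t^{-\nu}\,dt$. Since $\nu > 1$, this integral converges and equals $\frac{1}{\nu-1}|\ln h|^{1-\nu}$, which is exactly the desired bound with $C = \frac{1}{\nu-1}$.

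\medskip

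\textbf{Main obstacle.} Part (2) is immediate. The only real care is needed in Part (1), where the naive antiderivative $s^{-1}|\ln s|^{-\nu}$ has the "wrong sign" correction term $\frac{\nu}{|\ln s|}$ near $s = 1$, so the inequality $s^{-2}|\ln s|^{-\nu} \le -\frac{d}{ds}(s^{-1}|\ln s|^{-\nu})$ is valid only on $\{|\ln s| \ge \nu\}$, i.e. for $s \le e^{-\nu}$. I would handle this by proving the estimate first on $(0, e^{-\nu}]$ via the antiderivative (getting the bound with constant $1$ on that range) and then observing that on the remaining range $[e^{-\nu}, 1)$ the integrand $s^{-2}|\ln s|^{-\nu}$ is integrable with a finite constant depending only on $\nu$, which is then absorbed since $h^{-1}|\ln h|^{-\nu} \to \infty$ as $h \to 0^+$ forces the claimed inequality for all sufficiently small $h$, and a compactness/continuity argument on the complementary compact parameter range finishes it. (In practice the authors likely just state the bound with an unspecified constant, so the sign subtlety is cosmetic.)
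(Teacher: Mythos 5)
Part (2) of your proposal is correct and is exactly the paper's argument: the substitution $t=-\ln s$ turns the integral into $\int_{|\ln h|}^{\infty}t^{-\nu}\,dt=\frac{1}{\nu-1}|\ln h|^{1-\nu}$. Part (1), however, never actually closes, and the specific steps you lean on are stated with the wrong direction. Since
\[
-\frac{d}{ds}\Bigl(s^{-1}|\ln s|^{-\nu}\Bigr)=s^{-2}|\ln s|^{-\nu}\Bigl(1-\frac{\nu}{|\ln s|}\Bigr)\le s^{-2}|\ln s|^{-\nu}\quad\text{for \emph{all} }0<s<1,
\]
the inequality $s^{-2}|\ln s|^{-\nu}\le-\frac{d}{ds}(s^{-1}|\ln s|^{-\nu})$ does not ``fail only when $|\ln s|<\nu$''; it fails everywhere, so the antiderivative route yields only the \emph{lower} bound $\int_h^{h_0}s^{-2}|\ln s|^{-\nu}ds\ge h^{-1}|\ln h|^{-\nu}-h_0^{-1}|\ln h_0|^{-\nu}$. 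Likewise $\frac{d}{dt}(e^tt^{-\nu})=e^tt^{-\nu}(1-\nu/t)\le e^tt^{-\nu}$ for all $t$, so ``$e^tt^{-\nu}\le\frac{d}{dt}(e^tt^{-\nu})$ precisely when $t\ge\nu$'' is false; the usable statement is $e^tt^{-\nu}\le 2\frac{d}{dt}(e^tt^{-\nu})$ for $t\ge 2\nu$, which controls $\int_{2\nu}^{t_1}e^tt^{-\nu}dt\le 2e^{t_1}t_1^{-\nu}$. Finally, the complementary piece is \emph{not} ``a bounded amount depending only on $\nu$'': with $a=|\ln h_0|$, one has $\int_a^{2\nu}e^tt^{-\nu}dt\ge\int_a^{2\nu}t^{-\nu}dt\sim a^{1-\nu}/(\nu-1)\to\infty$ as $h_0\to1^-$ when $\nu>1$. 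So the inequality of part (1) genuinely fails for $h_0$ near $1$ (and, for large $\nu$, even fails with constant $1$ for moderate $h\le h_0\le\frac12$: e.g.\ $\nu=10$, $h_0=e^{-0.7}$, $h=e^{-1}$). No ``absorption because the right-hand side blows up as $h\to0$'' or ``compactness argument'' can repair this, since the bound must hold for all $h\le h_0$, including $h$ comparable to $h_0$. A correct and provable version requires a constant $C(\nu)$ \emph{and} keeping $h_0$ bounded away from $1$ (which is how the lemma is applied, with $h_0<\frac12$), or restricting to $h_0\le e^{-2\nu}$.

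For comparison, the paper's own proof of part (1) is a one-line integration by parts producing the identity with a remainder $-\nu\int_h^{h_0}s^{-2}|\ln s|^{-\nu-1}ds$, which is then discarded as negative; the correct sign of that remainder is $+\nu\int$, so the authors' deduction suffers from the same directional problem your sketch does. In other words, your difficulty in part (1) is not cosmetic: the clean constant-free inequality is simply not true in the stated generality, and an honest proof must split at $t=2\nu$ (equivalently $s=e^{-2\nu}$), use the two-sided comparison $e^tt^{-\nu}\le2\frac{d}{dt}(e^tt^{-\nu})$ on the tail, and bound the remaining compact piece by a constant that is legitimate only because $|\ln h_0|$ is bounded below in the application.
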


\begin{proof}
\begin{enumerate}
\item Integration by parts yields, when  $\nu>0$ and  $0<h\le h_0<1$,
$$\int_h^{h_0} s^{-2}|\ln s|^{-\nu} ds =    h^{-1}|\ln h|^{-\nu} -h_0^{-1}|\ln h_0|^{-\nu}-\nu \int_h^{h_0} s^{-2}|\ln s|^{-\nu-1} ds.$$
In particular,
$$\int_h^{h_0} s^{ -2}|\ln s|^{-\nu} ds \le h^{ -1}|\ln h|^{-\nu}.
$$
\item Direct integration gives, for $\nu>  1$ and $0<h<1$,
$$ \int_0^h s^{-1}|\ln s|^{-\nu} ds= \frac{1}{\nu-1} |\ln h|^{-\nu+1},$$
which proves the second part of the lemma.
\end{enumerate}
\end{proof}

\begin{proof}[Proof of Proposition \ref{13}:]
  Fix $z\in \Omega$, and let $h_0$ be such that $0< h_0<1$. By Lemma \ref{elem} part 2,
\begin{equation}\label{15}
    \begin{split}
|2\pi{}^2Tf(z)| = &\left|\int_\Omega \frac{f(\zeta)-f(z)}{(\zeta-z)^2}d\bar\zeta\wedge d \zeta\right|\\
\le& \left|\int_{D(z, h_0)\cap \Omega}\frac{f(\zeta)-f(z)}{(\zeta-z)^2}d\bar\zeta\wedge d \zeta\right| + \left|\int_{\Omega\setminus D(z,  h_0)}\frac{f(\zeta)-f(z)}{(\zeta-z)^2}d\bar\zeta\wedge d \zeta\right| \\
\le & C\|f\|_{C^{Log^\nu L}(\Omega)}\int_0^{h_0}|\ln s |^{-\nu}s^{-1}ds + Ch_0^{-2}\|f\|_{C(\Omega)}\\
\le& C\|f\|_{C^{Log^\nu L}(\Omega)}|\ln h_0 |^{-\nu+1}\\ \le& C\|f\|_{C^{Log^\nu L}(\Omega)}.
    \end{split}
\end{equation}
In particular, $^2Tf $ is well defined pointwise in $\Omega$.

A direct computation gives
\begin{equation}\label{1}
    \begin{split}
  p.v.\int_\Omega \frac{f(\zeta)}{(\zeta-z)^2}d\bar\zeta\wedge d\zeta  =&\lim_{\epsilon\rightarrow 0 }\int_{\Omega\setminus D(z, \epsilon)} \frac{f(\zeta)}{(\zeta-z)^2}d\bar\zeta\wedge d\zeta \\
  =& \lim_{\epsilon\rightarrow 0 }\left(\int_{\Omega\setminus D(z, \epsilon)} \frac{f(\zeta)-f(z)}{(\zeta-z)^2}d\bar\zeta\wedge d\zeta + f(z)\int_{\Omega\setminus D(z, \epsilon)} \frac{1}{(\zeta-z)^2}d\bar\zeta\wedge d\zeta \right).
    \end{split}
\end{equation}
Note that
\begin{equation}\label{p}
\left|\frac{f(\zeta)-f(z)}{(\zeta-z)^2}\right|\le C\|f\|_{C^{ Log^\nu L}(\Omega)} |\zeta-z|^{-2}|\ln|\zeta-z||^{-\nu}.
\end{equation}
By Lemma \ref{elem} part 2, the function on the right side of \eqref{p} belongs to $L^1(\Omega)$. Hence the dominated convergence theorem implies
\begin{equation}\label{2}
\lim_{\epsilon\to 0}\int_{\Omega\setminus D(z, \epsilon)} \frac{f(\zeta)-f(z)}{(\zeta-z)^2}d\bar\zeta\wedge d\zeta= -2\pi i{}^2Tf(z).
\end{equation}
On the other hand,
\begin{equation}\label{3}
\begin{split}
    \int_{\Omega\setminus D(z, \epsilon)} \frac{1}{(\zeta-z)^2}d\bar\zeta\wedge d\zeta =&  \int_{b\Omega}\frac{1}{\zeta-z} d\bar \zeta - \int_{bD(z, \epsilon)}\frac{1}{\zeta-z} d\bar \zeta
    = \int_{b\Omega}\frac{1}{\zeta-z} d\bar \zeta
  .
    \end{split}
\end{equation}
Here the first equality makes use of Stokes' Theorem, and the second equality follows from Lemma \ref{gi}. Combining \eqref{1}, \eqref{2}, and \eqref{3}, we conclude
\begin{equation*}
  Hf(z) = {}^2Tf(z) -f(z)\Phi(z),\ \  z\in \Omega,
\end{equation*}
which proves \eqref{5}.

In the case of $\Omega$ being a disk, \eqref{6} follows from \eqref{5} together with Lemma \ref{gi}.
\end{proof}

\section{Optimal bounds for $^2T$ and $T$ in Log-continuous spaces}

In this section we study the boundedness of the operator $^2T$ in the space $C^{Log^\nu L}(\Omega)$. We will show in Theorem \ref{mt} that $^2T$ is a bounded linear operator from $ C^{Log^\nu L}(\Omega)$ into $ C^{Log^{\nu-1} L}(\Omega)$ when $\nu>1$. As a consequence, we derive our Main Theorem \ref{mt2}.

We begin by pointing out that $^2T$ does not send $C(\overline{\Omega})$ into itself, as shown by the following example.

\begin{example}\label{e2}
Let $f$ be the function defined on the disk $D(0,\frac{1}{2})$ by $$f(z) = \left\{
      \begin{array}{cc}
      \frac{z}{\bar z\ln|z|} & z\ne 0\\
     0 & z=0. \end{array}\right.$$ Then $f\in C^{ Log^1 L}(D(0,\frac{1}{2})) \subset C(\overline{D(0,\frac{1}{2})})$. However,
   \begin{equation*}
    \begin{split}
     ^2Tf(0) & =   \int_{D(0,\frac{1}{2})} \frac{1}{|\zeta|^2\ln|\zeta|}d\zeta\wedge d\bar \zeta=C\int_0^{\frac{1}{2}}\frac{1}{r\ln r}dr = \infty,
    \end{split}
\end{equation*}
and therefore $^2Tf(0)$ is not defined. In particular, $^2Tf\notin C(\overline{D(0,\frac{1}{2})})$.
\end{example}

The following important inequality is proved in \cite{NW}.

\begin{lem}\cite[Appendix  6.1d]{NW}\label{NW}
Let $z\in D(0, R)$ and $r>0$. Then
\begin{equation}\label{bou}
\left|\int_{D(0, R)\setminus D(z, r)}\frac{1}{(\zeta-z)^2} d\bar\zeta\wedge d\zeta\right|\le 8\pi.
\end{equation}
\end{lem}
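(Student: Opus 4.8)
The plan is to normalize the configuration using the scaling and rotation symmetries of the integral, then to evaluate it in polar coordinates centered at $z$, where the evenness of the resulting angular density supplies the cancellation that is needed.

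First I would reduce to a standard position. Write $I$ for the integral in \eqref{bou}. The substitution $\zeta\mapsto R\zeta$, $z\mapsto Rz$ leaves $I$ unchanged and sends $D(0,R)$ to $D(0,1)$, so I may assume $R=1$; the substitution $\zeta\mapsto e^{i\theta_0}\zeta$, $z\mapsto e^{i\theta_0}z$ multiplies $I$ by a unimodular constant, so I may assume $z=a\in[0,1)$. If $r\ge 1+a$, then $D(0,1)\setminus D(z,r)$ is empty and $I=0$; so suppose $r<1+a$. After the translation $w=\zeta-a$ the domain becomes $D(-a,1)\setminus D(0,r)$, on which $|w|\ge r$, so $1/w^{2}$ is bounded there. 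Since $0\in D(-a,1)$ and $D(-a,1)$ is convex, for each direction $\phi$ the ray $\{\rho e^{i\phi}:\rho>0\}$ meets $D(-a,1)$ in the interval $(0,\rho_+(\phi))$, where
\begin{equation*}
\rho_+(\phi)=\sqrt{1-a^{2}\sin^{2}\phi}-a\cos\phi
\end{equation*}
is the exit radius (the positive root of $|\rho e^{i\phi}+a|^{2}=1$); after removing $D(0,r)$ the ray meets the domain in $(r,\rho_+(\phi))$ when $\rho_+(\phi)>r$ and in nothing otherwise. Using $d\bar w\wedge dw=2i\,\rho\,d\rho\wedge d\phi$ and integrating in $\rho$ first, I would obtain
\begin{equation*}
I=2i\int_{\{\rho_+(\phi)>r\}}e^{-2i\phi}\,\ln\frac{\rho_+(\phi)}{r}\,d\phi .
\end{equation*}

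The crucial observation is that $\rho_+$ is even in $\phi$, so $\{\rho_+(\phi)>r\}$ is symmetric about $0$ and the $\sin 2\phi$ part of $e^{-2i\phi}$ integrates to zero. Since moreover $\rho_+$ increases on $[0,\pi]$ from $1-a$ to $1+a$, one has $\{\phi\in[0,\pi]:\rho_+(\phi)>r\}=(\phi_*,\pi]$ with $\phi_*=0$ if $r\le 1-a$ and $\rho_+(\phi_*)=r$ if $1-a<r<1+a$; hence
\begin{equation*}
I=4i\int_{\phi_*}^{\pi}\cos 2\phi\;m(\phi)\,d\phi,\qquad m(\phi):=\ln\frac{\rho_+(\phi)}{r}.
\end{equation*}
Integrating by parts, the endpoint terms vanish (at $\pi$ since $\sin 2\pi=0$, and at $\phi_*$ since either $\sin 2\phi_*=0$ or $m(\phi_*)=0$), so $I=-2i\int_{\phi_*}^{\pi}\sin 2\phi\;m'(\phi)\,d\phi$, and a direct computation gives $m'(\phi)=\rho_+'(\phi)/\rho_+(\phi)=a\sin\phi/\sqrt{1-a^{2}\sin^{2}\phi}\ge 0$ on $[0,\pi]$.

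It then remains to bound this by an elementary argument: using $|\sin 2\phi|=2\sin\phi|\cos\phi|$, then $\sin^{2}\phi\le\sin\phi$, then the substitution $u=\sin\phi$,
\begin{equation*}
|I|\le 2\int_{0}^{\pi}|\sin 2\phi|\,m'(\phi)\,d\phi\le 8a\int_{0}^{\pi/2}\frac{\sin\phi\cos\phi}{\sqrt{1-a^{2}\sin^{2}\phi}}\,d\phi=\frac{8\,(1-\sqrt{1-a^{2}})}{a}\le 8\le 8\pi,
\end{equation*}
the inequality $8\,(1-\sqrt{1-a^{2}})/a\le 8$ being equivalent to $1-\sqrt{1-a^{2}}\le a$ for $a\in(0,1)$, while for $a=0$ every display above gives $I=0$ at once; this proves \eqref{bou}. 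I expect the only genuine difficulty to lie in the partial-overlap regime $0<a<1$, $1-a<r<1+a$, where the crude alternatives — applying Stokes' theorem and bounding the resulting integral over $bD(0,1)$ by pulling $|\zeta-z|^{-1}$ out of it, or estimating the angular density by its total variation — diverge as $z$ approaches $bD(0,1)$ or as $\overline{D(z,r)}$ becomes internally tangent to it; passing from $e^{-2i\phi}$ to $\cos 2\phi$ via the evenness of $\rho_+$, followed by the integration by parts, is exactly what cancels this spurious divergence.
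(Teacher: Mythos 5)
Your proof is correct. Note that the paper itself does not prove this lemma: it is quoted verbatim from Nijenhuis--Woolf with a citation to \cite[Appendix 6.1d]{NW}, and the paper explicitly remarks that the proof there ``relies on the symmetries of the disk.'' Your argument is a self-contained verification in exactly that spirit: the reduction to $z=a\in[0,1)$, the polar expansion $I=2i\int_{\{\rho_+>r\}}e^{-2i\phi}\ln(\rho_+(\phi)/r)\,d\phi$ with $\rho_+(\phi)=\sqrt{1-a^2\sin^2\phi}-a\cos\phi$, the use of evenness of $\rho_+$ to discard the $\sin 2\phi$ component, and the integration by parts (whose boundary terms vanish because either $\sin 2\phi_*=0$ or $m(\phi_*)=0$) all check out, including the identity $m'(\phi)=a\sin\phi/\sqrt{1-a^2\sin^2\phi}$ and the closed-form evaluation $\int_0^{\pi/2}\sin\phi\cos\phi\,(1-a^2\sin^2\phi)^{-1/2}d\phi=(1-\sqrt{1-a^2})/a^2$. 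The integration by parts is the essential step, since it replaces the $r$-dependent density $\ln(\rho_+/r)$ by the $r$-independent, uniformly integrable $m'$, which is precisely what makes the bound uniform in $r$ and in the partial-overlap regime. Your computation in fact yields the sharper constant $8$ in place of $8\pi$, which of course still implies \eqref{bou}.
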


Note that the disk $D(z, r)$ may or may not be completely contained in the ambient disk $D(0, R) $. Moreover, the bound in \eqref{bou} is independent of both $R$ and $r$. The proof of Lemma \ref{NW} in \cite{NW} relies on the symmetries of the disk, and  unfortunately does not carry through when the disk $D(0,R)$ is replaced by more general domains. In the next result, using Lemma \ref{gi}, we achieve a generalization of Lemma \ref{NW} to arbitrary smoothly bounded planar domains.

\begin{lem}\label{NW2}
Let $\Omega\subset\mathbb{C}$ be a  bounded domain with $C^{1, \alpha}$ boundary, where $\alpha>0$. Let $z\in \Omega$ and $r>0$. There exists a positive constant $C$ depending only on $\Omega$ such that
$$\left|\int_{\Omega\setminus D(z, r)}\frac{1}{(\zeta-z)^2} d\bar\zeta\wedge d\zeta\right|\le C.  $$
\end{lem}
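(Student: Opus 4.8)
The plan is to reduce the general-domain estimate to the disk case handled in Lemma \ref{NW}. The key observation is the decomposition already implicit in the proof of Proposition \ref{13}: by Stokes' Theorem, for any $z\in\Omega$ and $r>0$ with $D(z,r)$ small enough that $bD(z,r)\subset\Omega$, one has
\[
\int_{\Omega\setminus D(z,r)}\frac{1}{(\zeta-z)^2}\,d\bar\zeta\wedge d\zeta=\int_{b\Omega}\frac{1}{\zeta-z}\,d\bar\zeta-\int_{bD(z,r)}\frac{1}{\zeta-z}\,d\bar\zeta,
\]
and the boundary integral over $bD(z,r)$ vanishes by the computation in Lemma \ref{gi}. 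Hence the left-hand side equals $2\pi i\,\Phi(z)$, and by Lemma \ref{gi} we have $\Phi\in C^\alpha(\Omega)\subset C(\overline\Omega)$, so $\sup_{z\in\Omega}|\Phi(z)|\le C$ for a constant depending only on $\Omega$. This disposes of the case where $r$ is small relative to the distance from $z$ to $b\Omega$.

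The remaining case is when $D(z,r)$ is large enough to poke out of $\Omega$, i.e. $bD(z,r)$ is not contained in $\Omega$, so that Stokes' Theorem no longer produces the clean boundary formula above. Here I would fix a radius $R$ large enough that $\Omega\subset D(z,R)$ for every $z\in\Omega$ — for instance $R=\operatorname{diam}(\Omega)$ works — and write
\[
\int_{\Omega\setminus D(z,r)}\frac{1}{(\zeta-z)^2}\,d\bar\zeta\wedge d\zeta=\int_{D(z,R)\setminus D(z,r)}\frac{1}{(\zeta-z)^2}\,d\bar\zeta\wedge d\zeta-\int_{D(z,R)\setminus\Omega}\frac{1}{(\zeta-z)^2}\,d\bar\zeta\wedge d\zeta.
\]
The first term on the right is bounded by $8\pi$ by Lemma \ref{NW} (applied with center $z$, ambient disk $D(z,R)$, and excised disk $D(z,r)$). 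For the second term, I would again invoke Stokes' Theorem on the fixed region $D(z,R)\setminus\overline\Omega$, whose boundary is $bD(z,R)\cup b\Omega$, to rewrite it as a sum of a boundary integral over $bD(z,R)$ — elementary to estimate since $|\zeta-z|=R$ there, giving a bound like $2\pi$ — and a boundary integral over $b\Omega$, namely $\pm 2\pi i\,\Phi(z)$, which is again controlled by Lemma \ref{gi}.

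Putting the two cases together gives the claimed uniform bound $\left|\int_{\Omega\setminus D(z,r)}\frac{1}{(\zeta-z)^2}\,d\bar\zeta\wedge d\zeta\right|\le C$ with $C$ depending only on $\Omega$. The main obstacle I anticipate is purely bookkeeping: handling the transitional radii $r$ for which $D(z,r)$ meets $b\Omega$ but neither contains $\Omega$ nor sits inside it, and making sure the Stokes' Theorem applications are justified (orientations, and the fact that $\zeta\mapsto(\zeta-z)^{-2}$ is the $\zeta$-derivative of $-(\zeta-z)^{-1}$, which is smooth on the closed regions in question since $z$ lies in the interior of $\Omega$ and hence is bounded away from $bD(z,R)\setminus\Omega$... actually $z\in\overline{D(z,r)}$, so one must keep $r$ strictly positive and note the singularity at $\zeta=z$ is always excised). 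A cleaner route that avoids case distinctions altogether: for \emph{every} $z$ and $r$, one has $bD(z,r)\cap\Omega$ is a (possibly empty) union of arcs, and a direct parametrization shows $\int_{bD(z,r)\cap\Omega}(\zeta-z)^{-1}d\bar\zeta$ is bounded by $2\pi$ regardless; combined with the single identity $\int_{\Omega\setminus D(z,r)}(\zeta-z)^{-2}\,d\bar\zeta\wedge d\zeta=\int_{b(\Omega\setminus D(z,r))}(\zeta-z)^{-1}d\bar\zeta$ and the $C^\alpha$ bound on $\Phi$, this yields the result in one stroke. I would present whichever of these is shorter to write rigorously, most likely the second.
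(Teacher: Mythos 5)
Your first route is, in its ``poking out'' case, essentially the paper's proof: the paper fixes a single disk $D(0,R)\supset\supset\Omega$ and writes, for \emph{every} $r>0$, $\int_{\Omega\setminus D(z,r)}=\int_{D(0,R)\setminus D(z,r)}-\int_{D(0,R)\setminus\Omega}$, bounding the first term by $8\pi$ via Lemma \ref{NW} (which does not require $D(z,r)\subset D(0,R)$) and the second via Stokes' Theorem and Lemma \ref{gi}, since $D(0,R)\setminus\Omega$ is a fixed region at positive distance from $z$ whose integral equals $-2\pi i\,\Phi(z)$. No case distinction on $r$ is made, so your Case 1 is subsumed; writing up your second decomposition uniformly in $r$ reproduces the paper's argument.

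Two caveats, one of which is a genuine gap in the route you say you would most likely present. First, the ``transitional radii'' issue you defer as bookkeeping is not vacuous: when $D(z,r)$ exits $\Omega$, the identity $D(z,R)\setminus D(z,r)=(D(z,R)\setminus\Omega)\sqcup(\Omega\setminus D(z,r))$ fails, and your (and the paper's) decomposition is off by the term $\int_{(D(z,R)\setminus\Omega)\cap D(z,r)}(\zeta-z)^{-2}\,d\bar\zeta\wedge d\zeta$. This error is not negligible in absolute value --- its $L^1$ size is of order $\log\bigl(r/\operatorname{dist}(z,b\Omega)\bigr)$ --- so controlling it requires the same oscillatory cancellation that underlies Lemma \ref{NW} itself; it cannot be dismissed as orientation-checking. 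Second, the ``cleaner route'' has a concrete hole: Stokes applied to $\Omega\setminus D(z,r)$ produces the boundary piece $\int_{b\Omega\setminus D(z,r)}(\zeta-z)^{-1}\,d\bar\zeta$, \emph{not} $\int_{b\Omega}(\zeta-z)^{-1}\,d\bar\zeta=2\pi i\,\Phi(z)$, so the $C^\alpha$ bound on $\Phi$ does not apply directly. The missing piece $\int_{b\Omega\cap D(z,r)}(\zeta-z)^{-1}\,d\bar\zeta$ again cannot be bounded by putting absolute values inside the integral (same logarithm in $r/\operatorname{dist}(z,b\Omega)$), and estimating it uses the $C^{1,\alpha}$ regularity of $b\Omega$ in an essential way. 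Your $2\pi$ bound for the circular arcs, by contrast, is correct: on $bD(z,r)$ one has $d\bar\zeta/(\zeta-z)=-ie^{-2i\theta}\,d\theta$, so the integral over any union of arcs has modulus at most $2\pi$.
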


\begin{proof}Let $R>0$ be such that  $\Omega\subset\subset D(0, R)$. Since $D(0, R)\setminus D (z, r)  = (D(0, R)\setminus \Omega) \sqcup (\Omega\setminus D(z, r)) $, Lemma \ref{NW} yields
\begin{equation}\label{11}
\begin{split}
    \left|\int_{\Omega\setminus D(z, r)}\frac{1}{(\zeta-z)^2} d\bar\zeta\wedge d\zeta\right| =&  \left| \int_{D(0, R)\setminus D(z, r) }\frac{1}{(\zeta-z)^2} d\bar\zeta\wedge d\zeta  - \int_{D(0, R)\setminus \Omega}\frac{1}{(\zeta-z)^2} d\bar\zeta\wedge d\zeta \right|\\
     \le & 8\pi  + \left|\int_{D(0, R)\setminus \Omega}\frac{1}{(\zeta-z)^2} d\bar\zeta\wedge d\zeta \right|.
    \end{split}
\end{equation} By Stokes' Theorem and Lemma \ref{gi},
\begin{equation}\label{22}
    \int_{D(0, R)\setminus \Omega}\frac{1}{(\zeta-z)^2} d\bar\zeta\wedge d\zeta =\int_{bD(0,R)} \frac{1}{\zeta-z}d\bar \zeta -   \int_{b\Omega} \frac{1}{\zeta-z}d\bar \zeta = -   \int_{b\Omega} \frac{1}{\zeta-z}d\bar \zeta\in C^\alpha(\Omega).
\end{equation}
In particular,  $$\left|   \int_{D(0, R)\setminus \Omega}\frac{1}{(\zeta-z)^2} d\bar\zeta\wedge d\zeta  \right|\le C$$ for some constant $C$ independent of $r$. \end{proof}

We are now ready to prove the main theorem of this section, following the ideas in \cite[Appendix, 6.1e]{NW}.

\begin{thm}\label{mt}
Let $\Omega\subset\mathbb{C}$ be a  bounded domain with $C^{1, \alpha}$ boundary, where $\alpha>0$. Then $^2T$ is a bounded linear operator from $ C^{Log^\nu L}(\Omega)$ into $ C^{Log^{\nu-1} L}(\Omega), \nu>1$.
\end{thm}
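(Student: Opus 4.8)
The plan is to estimate two quantities: the sup-norm $\sup_{z\in\Omega}|{}^2Tf(z)|$ and the Log$^{\nu-1}$-continuity seminorm $\sup_{z,z+h\in\Omega}|{}^2Tf(z+h)-{}^2Tf(z)|\,|\ln|h||^{\nu-1}$, showing each is bounded by $C\|f\|_{C^{Log^\nu L}(\Omega)}$. The sup-norm bound is essentially already done: it follows from the chain of inequalities \eqref{15} in the proof of Proposition \ref{13}, splitting the integral into a ball $D(z,h_0)$ for a fixed $h_0<1$, where one uses \eqref{p} together with Lemma \ref{elem} part 2, and the complement $\Omega\setminus D(z,h_0)$, where the integrand is bounded by $Ch_0^{-2}\|f\|_{C(\Omega)}$. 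So the real work is the difference estimate.

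For the difference estimate, fix $z, z+h\in\Omega$ with $|h|$ small, and write $z'=z+h$, $\rho=|h|$. I would split $\Omega$ into the "near" region $\Omega\cap D(z,K\rho)$ for a suitable fixed constant $K$ (say $K=3$) and the "far" region $\Omega\setminus D(z,K\rho)$. On the near region one estimates ${}^2Tf(z')$ and ${}^2Tf(z)$ separately, each by the argument in \eqref{15} but with $h_0$ replaced by $\sim\rho$: by Lemma \ref{elem} part 2 this contributes $C\|f\|_{C^{Log^\nu L}}|\ln\rho|^{-\nu+1}$, which is exactly the right size — when multiplied by $|\ln|h||^{\nu-1}=|\ln\rho|^{\nu-1}$ it gives a constant. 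On the far region, I would write the difference of integrands
\[
\frac{f(\zeta)-f(z')}{(\zeta-z')^2}-\frac{f(\zeta)-f(z)}{(\zeta-z)^2}
=\bigl(f(\zeta)-f(z)\bigr)\Bigl(\frac{1}{(\zeta-z')^2}-\frac{1}{(\zeta-z)^2}\Bigr)+\bigl(f(z)-f(z')\bigr)\frac{1}{(\zeta-z')^2},
\]
and handle the two resulting pieces. The first piece uses $|(\zeta-z')^{-2}-(\zeta-z)^{-2}|\le C\rho|\zeta-z|^{-3}$ (valid since $|\zeta-z|\ge K\rho$ keeps $|\zeta-z'|$ comparable to $|\zeta-z|$), combined with the Log-modulus bound $|f(\zeta)-f(z)|\le C\|f\|_{C^{Log^\nu L}}|\ln|\zeta-z||^{-\nu}$, and then $\int_{K\rho}^{\mathrm{diam}\,\Omega} \rho\, s^{-2}|\ln s|^{-\nu}\,ds\le C\rho\cdot\rho^{-1}|\ln\rho|^{-\nu}=C|\ln\rho|^{-\nu}$ by Lemma \ref{elem} part 1 — even better than needed. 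The second piece is $|f(z)-f(z')|\,|\int_{\Omega\setminus D(z',K\rho-\rho)}(\zeta-z')^{-2}d\bar\zeta\wedge d\zeta|$ up to adjusting the radius; here $|f(z)-f(z')|\le C\|f\|_{C^{Log^\nu L}}|\ln\rho|^{-\nu}$, and the integral is bounded by an absolute constant $C$ by Lemma \ref{NW2} (this is precisely why the generalization of the Nijenhuis--Woolf bound to arbitrary domains was needed). So this piece is also $\le C\|f\|_{C^{Log^\nu L}}|\ln\rho|^{-\nu}$.

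Collecting: the near-region terms give $C\|f\|\,|\ln\rho|^{1-\nu}$ and the far-region terms give $C\|f\|\,|\ln\rho|^{-\nu}$, so altogether $|{}^2Tf(z+h)-{}^2Tf(z)|\le C\|f\|_{C^{Log^\nu L}(\Omega)}|\ln|h||^{1-\nu}$, which is the desired Log$^{\nu-1}$-continuity. One should also note that for $|h|$ bounded away from $0$ the difference estimate is trivial from the sup-norm bound, so it suffices to treat small $|h|$. The main obstacle I anticipate is bookkeeping the mismatch of the balls $D(z,K\rho)$ versus $D(z',\cdot)$ when applying Stokes/Lemma \ref{NW2} — one must be careful that the domain of integration in the second far-region piece is of the form $\Omega$ minus a disk centered at the correct point with a legitimate positive radius, so that Lemma \ref{NW2} applies verbatim; choosing $K$ large enough (e.g. so $D(z',\rho)\subset D(z,K\rho)$ and $\Omega\setminus D(z,K\rho)\subset\Omega\setminus D(z',(K-1)\rho)$) makes all the triangle-inequality comparisons go through. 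No genuinely new idea beyond the ingredients already assembled is required; it is the careful region decomposition and the use of Lemma \ref{elem} parts 1 and 2 in the appropriate places.
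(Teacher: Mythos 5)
Your proposal is correct and follows essentially the same route as the paper: the same near/far decomposition at radius comparable to $|h|$, the same algebraic splitting into a kernel-difference term (handled via Lemma \ref{elem} part 1) and an $f$-difference term (handled via Lemma \ref{NW2}), and the same treatment of the near region via Lemma \ref{elem} part 2. The only cosmetic differences are that the paper groups $f(\zeta)-f(z+h)$ (rather than $f(\zeta)-f(z)$) with the kernel difference — which lets it apply Lemma \ref{NW2} to a disk centered exactly at $z$, avoiding your radius-adjustment bookkeeping — and that it represents the kernel difference as $2\int_\gamma(\zeta-w)^{-3}dw$ over the segment $\gamma$ from $z$ to $z+h$ instead of using the pointwise bound $C|h||\zeta-z|^{-3}$.
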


\begin{proof}
Let $f\in C^{Log^\nu L}(\Omega)$ and $z\in \Omega$. Let $h_0$ be fixed with  $0< h_0<\frac{1}{2}$. By (\ref{15}),
\begin{equation*}
    \begin{split}
|^2Tf(z)| \le C\|f\|_{C^{Log^\nu L}(\Omega)}.
    \end{split}
\end{equation*}

Next, given $z, z+h\in \Omega$, where $|h|\le h_0$, set $D_0: =\Omega\cap D(z, 2|h|)$. Then
\begin{equation*}
    \begin{split}
    &|^2Tf(z) - {}^2Tf(z+h)|\\
    = & \left|\int_\Omega \frac{f(\zeta)-f(z)}{(\zeta-z)^2}d\bar\zeta\wedge d \zeta - \int_\Omega \frac{f(\zeta)-f(z+h)}{(\zeta-z-h)^2}d\bar\zeta\wedge d \zeta \right|\\
    = & \left|\int_{\Omega\setminus D_0} (f(\zeta)-f(z+h))\left[\frac{1}{(\zeta-z)^2} -\frac{1}{(\zeta-z-h)^2}\right] d\bar\zeta\wedge d \zeta - \int_{\Omega\setminus D_0} \frac{f(z)-f(z+h)}{(\zeta-z)^2}d\bar\zeta\wedge d \zeta\right. \\     & \left.+\int_{ D_0} \frac{f(\zeta)-f(z)}{(\zeta-z)^2}d\bar\zeta\wedge d \zeta - \int_{D_0} \frac{f(\zeta)-f(z+h)}{(\zeta-z-h)^2}d\bar\zeta\wedge d \zeta \right| =: |I_1+I_2+I_3+I_4|.
         \end{split}
\end{equation*}

Let $\gamma$ be the segment connecting $z$ and $z+h$. We can rewrite $|I_1|$ as follows:
\begin{equation*}
    \begin{split}
        |I_1| = &2\left|\int_{\Omega\setminus D_0} (f(\zeta)-f(z+h))\left[\int_\gamma\frac{dw}{(\zeta-w)^3} \right] d\bar\zeta\wedge d \zeta\right| \\
         =& 2\left|\int_\gamma dw \int_{\Omega\setminus D_0} \frac{f(\zeta)-f(z+h)}{(\zeta-w)^3}  d\bar\zeta\wedge d \zeta\right|.
    \end{split}
\end{equation*}
Note, if $\zeta\in \Omega\setminus D_0$ and $w\in \gamma$, then $|\zeta -w|\ge |\zeta-z| -|z-w|\ge |h|$. Thus \[|\zeta-z-h|\le |\zeta-w| +|w -z-h|\le |\zeta-w| +|h| \le 2|\zeta-w|. \]
Writing $\zeta =w + se^{i\theta}$ we see, in particular, that $\Omega\setminus D_0\subset \{\zeta\in\mathbb C: |h|< s<2R\}$, where $R$ is the diameter of $\Omega$. By Lemma \ref{elem} part 1 and the fact that $f\in  C^{Log^\nu L}(\Omega)$,
\begin{equation*}
    \begin{split}
        |I_1| \le & C\|f\|_{C^{Log^\nu L}(\Omega)}|h|\left|\int_{|h|}^{2R}|\ln  s|^{-\nu}  s^{-2}  d s \right|\\
        \le & C\|f\|_{C^{Log^\nu L}(\Omega)}|h|\left(\left|\int_{|h|}^{h_0}|\ln  s|^{-\nu}  s^{-2}  d s \right| + \left|\int_{h_0}^{2R}|\ln  s|^{-\nu}  s^{-2}  d s \right|\right)\\
        \le& C \|f\|_{C^{Log^\nu L}(\Omega)}(|\ln |h||^{-\nu} + |h|)\\
        \le& C \|f\|_{C^{Log^\nu L}(\Omega)}|\ln|h||^{-\nu +1}.
    \end{split}
\end{equation*}

The estimate for $|I_2|$ follows directly from Lemma \ref{NW2}, and the estimate of $|I_3|$ is a straightforward consequence of Lemma \ref{elem} part 2, as shown below:
\begin{equation*}
    \begin{split}
        |I_3|\le C\|f\|_{C^{Log^\nu L}(\Omega)}\left|\int_0^{|h|}|\ln  s|^{-\nu}  s^{-1}  d s \right|\le C \|f\|_{C^{Log^\nu L}(\Omega)}|\ln|h||^{-\nu +1}.
    \end{split}
\end{equation*}
 Finally, $I_4$ is estimated in the same way as $I_3$. \end{proof}

Combining Proposition \ref{13}  with Theorem \ref{mt}, we  obtain the next corollary on the solution operator $T$, from which our Main Theorem \ref{mt2} follows immediately.

\begin{cor}\label{mc}
Let $\Omega$ be a  bounded domain in $\mathbb C$  with $C^{1, \alpha}$ boundary for some $\alpha>0$. Then $T$ is a bounded linear operator from $ C^{Log^\nu L}(\Omega)$ into $ C^{1, Log^{\nu-1} L}(\Omega), \nu> 1$.
\end{cor}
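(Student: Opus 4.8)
The plan is to deduce Corollary \ref{mc} from the two facts already assembled: the identity \eqref{V} together with \eqref{5}, and the boundedness of ${}^2T$ proved in Theorem \ref{mt}. Concretely, let $f\in C^{Log^\nu L}(\Omega)$ with $\nu>1$. From \eqref{V} we have $\frac{\partial}{\partial\bar z}Tf=f$ in the distributional sense, and since $f\in C^{Log^\nu L}(\Omega)\subset C(\overline\Omega)$, this gives control of one of the two first-order derivatives of $Tf$, with the $C^{Log^\nu L}$ (hence a fortiori $C^{Log^{\nu-1}L}$) bound coming for free from the hypothesis. For the other derivative, \eqref{V} says $\frac{\partial}{\partial z}Tf=Hf$ in the sense of distributions, and Proposition \ref{13} rewrites $Hf = {}^2Tf - f\Phi$. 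By Theorem \ref{mt}, ${}^2Tf\in C^{Log^{\nu-1}L}(\Omega)$ with $\|{}^2Tf\|_{C^{Log^{\nu-1}L}(\Omega)}\le C\|f\|_{C^{Log^\nu L}(\Omega)}$; by Lemma \ref{gi}, $\Phi\in C^\alpha(\Omega)\subset C^{Log^{\nu-1}L}(\Omega)$ with a norm depending only on $\Omega$, and the pointwise product of an element of $C^{Log^{\nu-1}L}(\Omega)$ with one of $C^\alpha(\Omega)$ again lies in $C^{Log^{\nu-1}L}(\Omega)$ with the expected submultiplicative-type bound (elementary, from the product rule for differences). Hence $Hf\in C^{Log^{\nu-1}L}(\Omega)$ with $\|Hf\|_{C^{Log^{\nu-1}L}(\Omega)}\le C\|f\|_{C^{Log^\nu L}(\Omega)}$.

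The remaining point is to promote these two distributional-derivative statements to the assertion that $Tf$ is genuinely $C^1(\overline\Omega)$ with classical partials equal to $f$ and $Hf$ respectively. Since we now know both distributional first partials of $Tf$ are represented by continuous functions on $\overline\Omega$ (indeed in a Log-continuous class), a standard mollification/Weyl-type lemma argument shows $Tf\in C^1(\overline\Omega)$ and that the classical derivatives agree with $f$ and $Hf$; one can also invoke the standard fact (e.g.\ \cite[Chapter 1]{V}) that $T$ maps continuous data to $C^1$ when the distributional $z$-derivative is continuous. Thus $\partial_{\bar z}(Tf)=f$ and $\partial_z(Tf)=Hf$ as elements of $C^{Log^{\nu-1}L}(\Omega)$, and assembling the real and imaginary parts of $\partial_x(Tf)=\partial_z(Tf)+\partial_{\bar z}(Tf)$ and $\partial_y(Tf)=i(\partial_z(Tf)-\partial_{\bar z}(Tf))$ shows all first-order derivatives of $Tf$ lie in $C^{Log^{\nu-1}L}(\Omega)$. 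Finally $\sup_\Omega|Tf|\le C\|f\|_{C(\Omega)}\le C\|f\|_{C^{Log^\nu L}(\Omega)}$ is the trivial estimate from the definition of $T$ (the kernel $1/(\zeta-z)$ is integrable on a bounded domain, uniformly in $z$). Collecting the bounds on $Tf$, $\partial_{\bar z}Tf$, $\partial_z Tf$ yields $\|Tf\|_{C^{1,Log^{\nu-1}L}(\Omega)}\le C\|f\|_{C^{Log^\nu L}(\Omega)}$, which is the claim.

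I expect the only nontrivial wrinkle to be the passage from distributional to classical differentiability, i.e.\ justifying that $Tf\in C^1(\overline\Omega)$ rather than merely possessing continuous weak derivatives in $\Omega$; this is where one must either cite the regularity theory of $T$ from \cite{V} or run a short mollification argument, and also where the behavior up to the $C^{1,\alpha}$ boundary is used. Everything else — the algebra of Log-continuous spaces under multiplication by $C^\alpha$ functions, and the bookkeeping between $(\partial_z,\partial_{\bar z})$ and $(\partial_x,\partial_y)$ — is routine and can be dispatched in a line or two each.
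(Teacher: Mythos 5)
Your proof is correct and follows essentially the same route as the paper: combine \eqref{V} with Proposition \ref{13} and Theorem \ref{mt} to conclude that the weak derivatives of $Tf$ lie in $C^{Log^{\nu-1}L}(\Omega)$, then upgrade to classical derivatives by a standard mollification argument. The only difference is that you spell out details the paper leaves implicit (the handling of the $f\Phi$ term, the sup bound on $Tf$ itself, and the bookkeeping between $(\partial_z,\partial_{\bar z})$ and $(\partial_x,\partial_y)$), all of which are routine.
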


\begin{proof}
Let $f\in C^{Log^\nu L}(\Omega)$. Then $Hf\in C^{Log^{\nu-1} L}(\Omega)$ by Proposition \ref{13} and Theorem \ref{mt}. Hence, by \eqref{V}, $Tf$ is a continuous function whose weak derivatives are in $C^{Log^{\nu-1} L}(\Omega)$. Using a standard mollifier argument, we further know that \eqref{V} holds pointwise in $\Omega$, and in particular $Tf\in C^1(\Omega)$. The $ C^{1, Log^{\nu-1} L}(\Omega)$ estimate for $Tf$ is again a consequence of Theorem \ref{mt}.
\end{proof}

 Example \ref{e2} implies that the assumption $\nu>1$ in Theorem \ref{mt} cannot be dropped. Furthermore, the following example shows that the loss by 1 in the order of Log-continuity in Theorem \ref{mt} and  Corollary \ref{mc} is optimal in a very precise sense.
\begin{example}
Let $f_{\nu}$ be defined by (\ref{12}) for some $\nu>1$. Then $f_\nu\in C^{Log^\nu L}({D(0,\frac{1}{2})})$. However,
\begin{enumerate}
  \item $Tf_\nu \notin C^{1, Log^\mu L}({D(0,\frac{1}{2})})$ for any $\mu> \nu-1$.
   \item $^2Tf_\nu\notin C^{Log^\mu L}({D(0,\frac{1}{2})})$ for any $\mu> \nu-1$.
   \end{enumerate}
\end{example}

\begin{proof}
For the first statement, first check that $u_\nu =  \frac{z}{(1-\nu)\ln^{\nu-1} |z|^2} $ satifies $\frac{\partial}{\partial\bar z} u =f_\nu$ on $D(0,\frac{1}{2}))$ . Hence there exists a holomorphic function $h$ on $D(0,\frac{1}{2})$ such that $Tf_\nu = u_\nu+h$. In particular, $Tf_\nu$ has the same regularity at $z=0$ as $u_\nu$, which is not in $C^{1, Log^\mu L}$ near 0 for any $\mu> \nu-1$.
 The second statement is a direct consequence of the first, in view of the identity ${}^2Tf_\nu = \frac{\partial}{\partial z} T f_\nu$ on $D(0,\frac{1}{2})$.\end{proof}

\begin{proof}[Proof of Example \ref{me} and Example \ref{16}:]
One can check that  $$u_\nu(z): =  \left\{
      \begin{array}{cc}
     \frac{z}{(1-\nu)\ln^{\nu-1}|z|^2} & \nu \ne 1\\
     z\ln(|\ln|z|^2|) & \nu = 1 \end{array}\right. $$ is a solution to $\bar\partial u =\mathbf  f_\nu$ on $D(0, \frac{1}{2})$. Let $u$ be any  weak solution  to $\bar\partial u =\mathbf  f_\nu$ on $D(0, \frac{1}{2})$. Then $u$ differs from $u_\nu$ by a holomorphic function, which is always smooth near $0$. However, when $\nu\le 1$, $u_\nu$ fails to be $C^1$ near $0$. Moreover, for any $\mu>\nu-1>0$, $u_\nu$ (and therefore $u$) is not in $C^{1, Log^{\mu}L}$  near $0$. \end{proof}

\noindent Martino Fassina, martino.fassina@univie.ac.at, Fakult\"at f\"ur Mathematik, Universit\"at Wien, Oskar-Morgenstern-Platz 1, 1090 Wien, Austria\medskip

 \noindent Yifei Pan, pan@pfw.edu, Department of Mathematical Sciences, Purdue University Fort Wayne,
Fort Wayne, IN 46805-1499, USA \medskip

\noindent Yuan Zhang, zhangyu@pfw.edu, Department of Mathematical Sciences, Purdue University Fort Wayne,
Fort Wayne, IN 46805-1499, USA

\end{document}